\title[]{A classification of barycentrically associative polynomial functions}\thanks{Corresponding author: Jean-Luc Marichal is with the Mathematics Research Unit, University of Luxembourg, 6, rue Coudenhove-Kalergi, L-1359 Luxembourg, Luxembourg.\\ Email: jean-luc.marichal[at]uni.lu}
\author{Jean-Luc Marichal}
\address{Mathematics Research Unit, FSTC, University of Luxembourg, 6, rue Coudenhove-Kalergi, L-1359 Luxembourg, Luxembourg}
\email{jean-luc.marichal[at]uni.lu }
\author{Pierre Mathonet}
\address{University of Li\`ege, Department of Mathematics, Grande Traverse, 12 - B37, B-4000 Li\`ege, Belgium}
\email{p.mathonet[at]ulg.ac.be }
\author{J\"org Tomaschek}
\address{Mathematics Research Unit, FSTC, University of Luxembourg, 6, rue Coudenhove-Kalergi, L-1359 Luxembourg, Luxembourg}
\email{research[at]jtomaschek.eu }
\date{October 31, 2014}
\begin{document}

\theoremstyle{plain}
\newtheorem{theorem}{Theorem}
\newtheorem{lemma}[theorem]{Lemma}
\newtheorem{proposition}[theorem]{Proposition}
\newtheorem{corollary}[theorem]{Corollary}
\newtheorem{fact}[theorem]{Fact}
\newtheorem*{main}{Main Theorem}

\theoremstyle{definition}
\newtheorem{definition}[theorem]{Definition}
\newtheorem{example}[theorem]{Example}

\theoremstyle{remark}
\newtheorem*{conjecture}{Conjecture}
\newtheorem*{remark}{Remark}
\newtheorem*{claim}{Claim}

\newcommand{\N}{\mathbb{N}}
\newcommand{\Z}{\mathbb{Z}}
\newcommand{\R}{\mathbb{R}}
\newcommand{\C}{\mathbb{C}}
\newcommand{\RR}{\mathcal{R}}
\newcommand{\bfx}{\mathbf{x}}
\newcommand{\bfy}{\mathbf{y}}
\newcommand{\bfz}{\mathbf{z}}
\newcommand{\bfalpha}{\boldsymbol{\alpha}}
\newcommand{\Ast}{\boldsymbol{\ast}}
\newcommand{\Cdot}{\boldsymbol{\cdot}}

\begin{abstract}
We describe the class of polynomial functions which are barycentrically associative over an infinite commutative integral domain.
\end{abstract}

\keywords{Barycentric associativity, decomposability, polynomial function, integral domain.}

\subjclass[2010]{Primary 39B72; Secondary 13B25, 26B35.}

\maketitle

\section{Introduction}

Let $\N$ be the set of nonnegative integers. Let also $X$ be an arbitrary nonempty set and let $X^*=\bigcup_{n\in\N} X^n$ be the set of all tuples on $X$, with the convention that $X^0=\{\varepsilon\}$ (i.e., $\varepsilon$ denotes the unique $0$-tuple on $X$). As usual, a function $F\colon X^n\to X$ is said to be \emph{$n$-ary}. Similarly, we say that a function $F\colon X^*\to X$ is \emph{$\Ast$-ary}. With a slight abuse of notation we may assume that every $\Ast$-ary function $F\colon X^*\to X$ satisfies $F(\varepsilon)=\varepsilon$. The \emph{$n$-ary part} $F_n$ of a function $F\colon X^*\to X$ is the restriction of $F$ to $X^n$, that is, $F_n=F|_{X^n}$. For tuples $\bfx=(x_1,\ldots,x_n)$ and $\bfy=(y_1,\ldots,y_m)$, the notation $F(\bfx,\bfy)$ stands for $F(x_1,\ldots,x_n,y_1,\ldots,y_m)$, and similarly for more than two tuples.

A function $F\colon X^*\to X$ is said to be \emph{barycentrically associative}, or \emph{B-associative} for short, if
\begin{equation}\label{eq:B-asso}
F(\bfx,\bfy,\bfz) ~=~ F(\bfx,k\Cdot F(\bfy),\bfz),
\end{equation}
for every integer $k\in\N$ and every $\bfx,\bfz\in X^*$ and $\bfy\in X^k$, where the notation $k\Cdot x$ means that the argument $x$ is repeated $k$ times. For instance, $F(x,2\Cdot y)=F(x,y,y)$.

Barycentric associativity was introduced in Schimmack \cite{Sch09} as a natural and suitable variant of associativity to characterize the arithmetic mean. Contrary to associativity, this property is satisfied by various means, including the geometric mean and the harmonic mean. It was also used by Kolmogoroff \cite{Kol30} and Nagumo \cite{Nag30} to characterize the class of quasi-arithmetic means.

Since its introduction this property was used under at least three different names: \emph{associativity of means} \cite{deF31}, \emph{decomposability} \cite[Sect.~5.3]{FodRou94}, and \emph{barycentric associativity} \cite{Ant98}. Here we have chosen the third one, which naturally recalls the associativity property of the barycenter as defined in affine geometry. For general background on barycentric associativity and its links with associativity, see \cite[Sect.~2.3]{GraMarMesPap09}.

Let $\RR$ be an infinite commutative integral domain (with identity). We say that a function $F\colon\RR^*\to\RR$ is a \emph{$\ast$-ary polynomial function}, or simply a \emph{polynomial function}, if $F_n=F|_{\RR^n}$ is a polynomial function for every integer $n\geqslant 1$.

In this note we provide a complete description of those polynomial functions $F\colon\RR^*\to\RR$ which are B-associative. This description is given in the Main Theorem below and the proof is given in the next section.

Any polynomial function $F\colon\RR^*\to\RR$ such that $F_n$ is constant for every $n\geqslant 1$ is clearly B-associative. It is straightforward to see that nontrivial instances of B-associative polynomial functions include
\begin{itemize}
\item the first projection, defined by $F_n(x_1,\ldots,x_n)=x_1$ for every $n\geqslant 1$,

\item the last projection, defined by $F_n(x_1,\ldots,x_n)=x_n$ for every $n\geqslant 1$,

\item the arithmetic mean, defined by $F_n(x_1,\ldots,x_n)=n^{-1}\sum_{i=1}^nx_i$ for every $n\geqslant 1$ (assuming that every integer $n\geqslant 1$ is invertible in $\RR$).
\end{itemize}

These examples are special cases of the following one-parameter family of polynomial functions. For every integer $n\geqslant 1$ and every $z\in\RR$ such that
$$
\Delta_n^z ~=~\sum_{i=1}^nz^{n-i}(1-z)^{i-1} ~=~\Delta_n^{1-z}
$$
is invertible, define the weighted arithmetic mean function $M_n^z\colon\RR^n\to\RR$ by
$$
M_n^z(\bfx) ~=~ (\Delta_n^z)^{-1}\,\sum_{i=1}^nz^{n-i}(1-z)^{i-1}{\,}x_i{\,}.
$$
For every $z\in\RR$ we define
$$
n(z) ~=~ \inf\{n\geqslant 1 : \Delta_n^z~\mbox{is not invertible}\}.
$$
Clearly, we have $n(z)\geqslant 3$. If $\Delta_n^z$ is invertible for every integer $n\geqslant 1$, then we set $n(z)=\infty$.

For every $z\in\RR$, consider the function $M^z\colon\RR^*\to\RR$ whose restriction to $\RR^n$ is $M_n^z$ if $n<n(z)$, and $0$, otherwise. The Main Theorem states that, up to special cases and constant functions, the typical B-associative polynomial functions are the functions $M^z$, where $z\in\RR$. Note that the special functions $M^1$, $M^0$, and $M^{1/2}$ are precisely the three above-mentioned instances of B-associative polynomial functions.

Given a function $F\colon X^*\to X$ and an integer $k\geqslant 1$ or $k=\infty$, we denote by $[F]_k$ the class of functions $G\colon X^*\to X$ obtained from $F$ by replacing $F_n$ with a constant function for every $n\geqslant k$. In particular, we have $[F]_{\infty}=\{F\}$.

\begin{main}
A polynomial function $F\colon\RR^*\to\RR$ is B-associative if and only if one of the following two conditions holds.
\begin{enumerate}
\item[(i)] There exist $z\in\RR$ and an integer $k\geqslant 1$ or $k=\infty$, with $k\leqslant n(z)$, such that $F\in [M^z]_k$.

\item[(ii)] There exists a polynomial function $Q\colon\RR^2\to\RR$ of degree $\geqslant 1$ such that $F_1(x)=x$, $F_2(x,y)=Q(x,y){\,}x+(1-Q(x,y)){\,}y$, and $F_n$ is constant for every $n\geqslant 3$.
\end{enumerate}
\end{main}

\begin{remark}
By the very definition of function $M^z$, we see that the condition $k\leqslant n(z)$ is not really needed to describe the set of possible functions $F$ in case (i) of the Main Theorem. However, we have added this condition to stress on the fact that $F_n$ can be any constant function for every $n\geqslant n(z)$.
\end{remark}

\begin{example}
Suppose that $\RR$ is a field of characteristic zero. One can readily see that $\Delta_n^z=0$ if and only if $(1-z)^n=z^n$ and $2z-1\neq 0$, that is, if and only if $z=1/(1+\omega_n)$, where $\omega_n\in\RR\setminus\{-1,1\}$ is an $n$-th root of unity. For instance, if $\RR$ is the field $\C$ of complex numbers and $F\colon\C^*\to\C$ is a B-associative polynomial function such that $F_3=M_3^z$, with $z=1/(1+i)$, then necessarily $F_n$ is constant for every $n\geqslant 4$.
\end{example}

\begin{example}
If $\RR$ is the ring $\Z$ of integers, then $n(0)=n(1)=\infty$ and $n(z)=3$ for every $z\in\Z\setminus\{0,1\}$. Thus, if $F\colon\Z^*\to\Z$ is a B-associative polynomial function of type (i), then $F\in [M^0]_k$ or $F\in [M^1]_k$ for some integer $k\geqslant 1$ or $k=\infty$, or $F\in [M^z]_k$ for some $z\in\Z\setminus\{0,1\}$ and some $k\in\{1,2,3\}$.
\end{example}

The following straightforward corollary concerns the special case when $F_n$ is symmetric (i.e., invariant under any permutation of the arguments) for every $n\geqslant 1$.

\begin{corollary}\label{cor:Sym}
Let $F\colon\RR^*\to\RR$ be a polynomial function such that $F_n$ is symmetric for every $n\geqslant 1$. Then $F$ is B-associative if and only if either $F_n$ is constant for every $n\geqslant 1$ or $1/2\in\RR$ and one of the following two conditions holds.
\begin{enumerate}
\item[(i)] There exists an integer $k\geqslant 2$ or $k=\infty$, with $k\leqslant n(1/2)$, such that $F\in [M^{1/2}]_k$.

\item[(ii)] There exists a nonzero antisymmetric polynomial function $Q\colon\RR^2\to\RR$ such that $F_1(x)=x$, $F_2(x,y)=\frac{x+y}{2}+(x-y){\,}Q(x,y)$, and $F_n$ is constant for every $n\geqslant 3$.
\end{enumerate}
\end{corollary}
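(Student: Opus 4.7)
My plan is to derive the corollary as a specialization of the Main Theorem under the extra hypothesis that every $F_n$ is symmetric. Sufficiency will follow from the Main Theorem together with an inspection of symmetry, so the substance lies in the necessity direction: starting from a B-associative $F$ with each $F_n$ symmetric, I invoke the Main Theorem and analyze its two cases separately.

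In case (i) of the Main Theorem, $F\in[M^z]_k$ for some $z\in\RR$ and $1\leq k\leq n(z)$. If $k=1$, every $F_n$ is constant and we obtain the first alternative of the corollary. If $k\geq 3$, then $F_2=M_2^z(x,y)=zx+(1-z)y$, and the symmetry condition $F_2(x,y)=F_2(y,x)$ reads $(2z-1)(x-y)=0$ as a polynomial identity in $\RR[x,y]$; since $\RR$ is an integral domain, this forces $2z=1$, so that $1/2\in\RR$ and $z=1/2$. The identity $M_n^{1/2}(\bfx)=n^{-1}\sum_i x_i$ then confirms that each $M_n^{1/2}$ is symmetric, yielding case (i) of the corollary. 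The intermediate case $k=2$ makes $F_2$ constant (automatically symmetric) and is subsumed in case (i) of the corollary when $1/2\in\RR$, since $[M^z]_2$ is independent of $z$.

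In case (ii) of the Main Theorem, $F_1(x)=x$, $F_n$ is constant for $n\geq 3$, and $F_2(x,y)=Q_M(x,y)\,x+(1-Q_M(x,y))\,y$ for some polynomial $Q_M$ of degree $\geq 1$. Expanding $F_2(x,y)=F_2(y,x)$ and simplifying gives $(Q_M(x,y)+Q_M(y,x)-1)(x-y)=0$ in $\RR[x,y]$, hence $Q_M(x,y)+Q_M(y,x)=1$. Decomposing $Q_M=S+A$ into symmetric and antisymmetric parts then yields $2S=1$, so $1/2\in\RR$, $S=1/2$, and $A$ is antisymmetric and nonzero (otherwise $\deg Q_M=0$). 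Setting $Q:=A$ produces $F_2(x,y)=\frac{x+y}{2}+(x-y)Q(x,y)$, matching case (ii) of the corollary. For sufficiency, this corresponds to the Main Theorem's case (ii) with $Q_M=1/2+Q$ (of degree $\geq 1$ since $Q\neq 0$), and the symmetry of $F_2$ follows because $(x-y)Q(x,y)$ is the product of two antisymmetric factors. No step is particularly delicate; the crux is the reduction of the symmetry condition on $F_2$ to a polynomial identity over the integral domain $\RR[x,y]$.
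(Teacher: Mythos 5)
Your derivation is the intended one: the paper gives no proof of this corollary (it is labelled ``straightforward''), and specializing the Main Theorem case by case, reducing the symmetry of $F_2$ to a polynomial identity in the integral domain $\RR[x,y]$, is exactly the natural route. The $k\geqslant 3$ analysis (the identity $(2z-1)(x-y)=0$ forcing $2z=1$, hence $1/2\in\RR$ and $z=1/2$) and the case~(ii) analysis (forcing $Q_M(x,y)+Q_M(y,x)=1$) are both correct. One presentational caveat: in case~(ii) you split $Q_M$ into symmetric and antisymmetric parts before you know that $2$ is invertible, which is circular as written. Set $x=y$ in $Q_M(x,y)+Q_M(y,x)=1$ first to get $2{\,}Q_M(x,x)=1$; its constant term already gives $1/2\in\RR$, and only then is the decomposition $Q_M=\tfrac12+Q$ legitimate.

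The one genuine loose end is your treatment of $k=2$. The function $F$ with $F_1(x)=x$ and $F_n$ constant for every $n\geqslant 2$ lies in $[M^z]_2$ for every $z$, is B-associative by the Main Theorem, and has every $F_n$ symmetric --- over \emph{any} infinite integral domain, including those such as $\Z$ in which $1/2\notin\RR$. Your phrase ``is subsumed in case (i) of the corollary \emph{when} $1/2\in\RR$'' concedes exactly the problem without resolving it: when $1/2\notin\RR$, this function satisfies neither ``$F_n$ constant for every $n\geqslant 1$'' nor the alternative headed by ``$1/2\in\RR$'', so the ``only if'' direction of the corollary as stated is not established for it (and indeed fails for it). This is arguably a defect of the corollary's formulation rather than of your argument --- the case $F_1=\mathrm{id}$, $F_n$ constant for $n\geqslant 2$ should be admitted without the hypothesis $1/2\in\RR$ --- but a complete proof must either close this case or explicitly flag the needed amendment; it cannot be passed over. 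Everything else checks out.
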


\section{Technicalities and proof of the Main Theorem}

We observe that the definition of $\RR$ enables us to identify the ring $\RR[x_1,\ldots,x_n]$ of polynomials of $n$ indeterminates over $\RR$ with the ring of polynomial functions of $n$ variables from $\RR^n$ to $\RR$.

It is a straightforward exercise to show that the $\ast$-ary polynomial functions given in the Main Theorem are B-associative.

We now show that no other $\ast$-ary polynomial function is B-associative. We first consider the special case when $\RR$ is a field. We will then prove the Main Theorem in the general case (i.e., when $\RR$ is an infinite commutative integral domain).

From the definition of B-associative functions, we immediately derive the following interesting fact.

\begin{fact}\label{fact:1}
Let $F\colon X^*\to X$ be a B-associative function.
\begin{enumerate}
\item[(i)] If $F_n$ is constant for some $n\geqslant 1$, then so is $F_{n+1}$.

\item[(ii)] Any $G\in \bigcup_{k\geqslant 1}[F]_k$ is B-associative.
\end{enumerate}
\end{fact}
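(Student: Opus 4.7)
My plan is to attack the two parts separately, since they use B-associativity in quite different ways.

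For part (i), suppose that $F_n$ is constant with value $c$. I would apply the defining identity \eqref{eq:B-asso} to the tuple $(y_1,\ldots,y_{n+1})\in X^{n+1}$ in two different ways: first with $\bfx=\varepsilon$, inner arguments $(y_1,\ldots,y_n)$, and $\bfz=(y_{n+1})$, which forces $F_{n+1}(y_1,\ldots,y_{n+1})=F_{n+1}(n\Cdot c,y_{n+1})$, so $F_{n+1}$ only depends on its last argument; second with $\bfx=(y_1)$, inner arguments $(y_2,\ldots,y_{n+1})$, and $\bfz=\varepsilon$, which gives $F_{n+1}(y_1,\ldots,y_{n+1})=F_{n+1}(y_1,n\Cdot c)$, so $F_{n+1}$ only depends on its first argument. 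Combining the two (e.g.\ by evaluating both reductions on the tuple $(y_1,c,\ldots,c,y_{n+1})$) shows that the value is independent of both $y_1$ and $y_{n+1}$, hence $F_{n+1}$ is constant. Nothing in this argument is subtle; the only point to check carefully is the boundary case $n=1$, which works verbatim.

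For part (ii), let $G\in [F]_k$, so $G_n=F_n$ for $n<k$ and $G_n$ is some constant $c_n$ for $n\geqslant k$. I would simply verify \eqref{eq:B-asso} for $G$ by splitting on the total arity. Fix $\bfx,\bfz\in X^*$ and $\bfy\in X^m$, and let $N=|\bfx|+m+|\bfz|$ be the common arity of the outer applications of $G$ on both sides. If $N\geqslant k$, both sides equal $c_N$ and we are done. If $N<k$, then $m\leqslant N<k$, so $G_m=F_m$ and $G_N=F_N$; the identity for $G$ then reduces to the identity for $F$, which holds by hypothesis. The $m=0$ case is trivial.

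Neither part should present a real obstacle; the mild subtlety is making sure the arities line up in part (ii) and that part (i) is stated so as to cover $n=1$ and the degenerate case $m=0$ in \eqref{eq:B-asso}.
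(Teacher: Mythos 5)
Your proof is correct. The paper treats this Fact as immediate from the definition and gives no proof; your argument is exactly the direct verification being left to the reader (the two-sided reduction for (i), and the case split on the outer arity $N$ versus $k$ for (ii)), with the arity bookkeeping handled properly.
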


A function $F\colon X^n\to X$ is said to be \emph{idempotent} if $F(n\Cdot x)=x$ for every $x\in X$. It is said to be \emph{range-idempotent} if $F(n\Cdot x)=x$ for every $x$ in the range of $F$. Equivalently, $F$ is range-idempotent if $\delta_F\circ F=F$, where $\delta_F$ is the diagonal section of $F$, defined by $\delta_F(x)=F(n\Cdot x)$. In this case we clearly have $\delta_F\circ\delta_F=\delta_F$.

Now let $F\colon\RR^*\to\RR$ be a B-associative polynomial function, where $\RR$ is a field. Since $F$ is B-associative, $F_n$ is clearly range-idempotent for every $n\geqslant 1$ (just take $\bfx =\bfz =\varepsilon$ in Eq.~(\ref{eq:B-asso})). The following lemma then shows that $F_n$ is either constant or idempotent.

\begin{lemma}\label{lemma:123}
A polynomial function $F\colon\RR^n\to\RR$ is range-idempotent if and only if it is either constant or idempotent.
\end{lemma}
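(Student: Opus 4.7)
My plan is to prove the nontrivial direction via the standard argument that a polynomial identity holding on an infinite subset of $\RR$ extends to an identity of polynomials. One direction is immediate: a constant $F$ trivially satisfies $\delta_F\circ F=F$, and if $F$ is idempotent then $\delta_F$ is the identity on $\RR$ so again $\delta_F\circ F=F$. For the other direction, assume $F$ is range-idempotent and not constant, and let $p(t)=F(t,\ldots,t)\in\RR[t]$ be the diagonal section viewed as a one-variable polynomial. The range-idempotence condition rewrites as $p(F(\bfx))-F(\bfx)=0$ for every $\bfx\in\RR^n$, which says that the polynomial $p(t)-t\in\RR[t]$ vanishes on the image $F(\RR^n)$. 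The plan is then to show that this image is infinite; combined with the standard fact that a nonzero polynomial over an integral domain has at most $\deg$ roots, this forces $p(t)=t$, which is exactly the statement that $F$ is idempotent.

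To show that $F(\RR^n)$ is infinite when $F$ is nonconstant, I would pick $\mathbf{a},\mathbf{b}\in\RR^n$ with $F(\mathbf{a})\neq F(\mathbf{b})$ and restrict $F$ to the affine line through $\mathbf{a}$ and $\mathbf{b}$: the one-variable polynomial $g(t)=F(\mathbf{a}+t(\mathbf{b}-\mathbf{a}))\in\RR[t]$ satisfies $g(0)\neq g(1)$ and hence is nonconstant. Each of its fibers in $\RR$ has size at most $\deg g$, so since $\RR$ is infinite, $g$ takes infinitely many values, all of which lie in $F(\RR^n)$.

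There is no serious obstacle in this argument; the main thing to watch is that $\RR$ is only assumed to be an infinite commutative integral domain (not necessarily a field), but both polynomial facts invoked---at most $\deg$ roots of a nonzero polynomial, and infinite image of a nonconstant one-variable polynomial on an infinite integral domain---follow from the absence of zero divisors, so the argument goes through verbatim.
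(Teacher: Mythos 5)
Your argument is correct, but it takes a different route from the paper's. The paper also reduces to the diagonal section $\delta_F(x)=F(n\Cdot x)$, but instead of looking at the range of $F$ it observes that range-idempotence forces the univariate functional equation $\delta_F\circ\delta_F=\delta_F$ (since each diagonal value $\delta_F(x)$ lies in the range of $F$), and then solves that equation by comparing leading terms: if $\delta_F=\sum_{i\leqslant d}a_ix^i$ is nonconstant, the identity forces $a_d^2x^{d^2}=a_dx^d$, hence $d=1$ and $a_1=1$, and a second substitution kills the constant term, so $\delta_F$ is the identity and $F=\delta_F\circ F$ is idempotent. You instead show that the set on which $p(t)-t$ vanishes, namely $F(\RR^n)$, is infinite, via the standard line-restriction trick, and conclude $p(t)=t$ from the bound on the number of roots of a nonzero polynomial over an integral domain. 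Both proofs are short and use only the absence of zero divisors plus the infinitude of $\RR$ (which the paper needs anyway to identify polynomials with polynomial functions). The paper's computation is slightly more self-contained, as it never needs to know anything about the range of $F$ beyond its diagonal values; your argument is a touch more conceptual and yields the stronger intermediate fact that any univariate polynomial identity satisfied on the range of a nonconstant polynomial function must hold identically, which could be reused elsewhere.
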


\begin{proof}
The condition is trivially sufficient. To see that it is also necessary, we let $F\colon\RR^n\to\RR$ be a range-idempotent polynomial function and show that its diagonal section $\delta_F$ is either constant or the identity function. Clearly, if $\delta_F$ is constant, then so is $F=\delta_F\circ F$.

Suppose that $\delta_F$ is nonconstant and let us write $\delta_F(x) = \sum_{i=0}^d a_i x^i$, with $d\geqslant 1$ and $a_d\neq 0$. By equating the leading (i.e., highest degree) terms in both sides of the identity $\delta_F\circ\delta_F=\delta_F$, we obtain $a_d^2x^{d^2}=a_dx^d$. Therefore, we must have $d=1$ and $a_1=1$, that is, $\delta_F(x)=x+a_0$. Substituting again in $\delta_F\circ\delta_F=\delta_F$, we obtain $a_0=0$.
\end{proof}

Let us write $F_n$ is the following standard form
$$
F_n(\bfx) ~=~ \sum_{j=0}^d\sum_{|\bfalpha|=j}a_{\bfalpha}\,\bfx^{\bfalpha},\quad \mbox{with}~\bfx^{\bfalpha} = x_1^{\alpha_1}\cdots\, x_n^{\alpha_n},
$$
where the inner sum is taken over all $\bfalpha\in\N^n$ such that $|\bfalpha|=\alpha_1+\cdots +\alpha_n=j$. This polynomial function is said to be of \emph{degree} $d$ if there exists $\bfalpha\in\N^n$, with $|\bfalpha|=d$, such that $a_{\bfalpha}\neq 0$.

Due to Fact~\ref{fact:1}, we may always assume that $F_n$ is nonconstant. By Lemma~\ref{lemma:123}, it is therefore idempotent, which means that
$$
\sum_{j=0}^d\left(\sum_{|\bfalpha|=j}a_{\bfalpha}\right)\, x^{j} ~=~ x,\qquad x\in\RR,
$$
or equivalently,
$$
\sum_{|\bfalpha|=1}a_{\bfalpha} ~=~ 1\quad\mbox{and}\quad \sum_{|\bfalpha|=j}a_{\bfalpha} ~=~ 0\quad \mbox{for $j\neq 1$}.
$$

We then have the following results.

\begin{lemma}\label{lemma:123xxe}
Let $F\colon\RR^*\to\RR$ be a B-associative polynomial function and assume that $F_{n+1}$ is nonconstant for some $n\geqslant 2$. Then there exists an idempotent binary polynomial function $P\colon\RR^2\to\RR$ such that
\begin{eqnarray}
F_{n+1}(x_1,\ldots,x_{n+1}) &=& P(F_n(x_1,\ldots,x_n),x_{n+1}),\label{eq:anc.2.5nF}\\
&=& P(F_n(x_1,(n-1)\Cdot F_n(x_2,\ldots,x_{n+1})),F_n(x_2,\ldots,x_{n+1}))\label{eq:anc.2.5n}
\end{eqnarray}
and
\begin{equation}\label{eq:anc.11n}
P(F_n(F_n(x_2,\ldots,x_{n+1}),x_2,\ldots,x_n),x_{n+1}) ~=~ F_n(x_2,\ldots,x_{n+1}).
\end{equation}
\end{lemma}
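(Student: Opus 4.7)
The plan is to define
$$
P(u,v) \;:=\; F_{n+1}(n\Cdot u,v)
$$
and then derive each of the three equalities from a short application of B-associativity, leveraging the fact that both $F_n$ and $F_{n+1}$ turn out to be idempotent polynomial functions.

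First I would establish the idempotency backbone. Since $F_{n+1}$ is nonconstant, the contrapositive of Fact~\ref{fact:1}(i) shows that $F_n$ is nonconstant as well. Taking $\bfx=\bfz=\varepsilon$ in (\ref{eq:B-asso}) makes $F_m$ range-idempotent for every $m$, so Lemma~\ref{lemma:123} yields that both $F_n$ and $F_{n+1}$ are in fact idempotent. The binary polynomial function $P$ is then clearly idempotent, since $P(u,u)=F_{n+1}((n+1)\Cdot u)=u$.

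Next, for (\ref{eq:anc.2.5nF}) I would apply (\ref{eq:B-asso}) with $\bfx=\varepsilon$, $\bfy=(x_1,\ldots,x_n)$, $\bfz=(x_{n+1})$: this collapses the first $n$ entries into $n\Cdot F_n(x_1,\ldots,x_n)$ and gives the claim directly from the definition of $P$. For (\ref{eq:anc.2.5n}), set $w=F_n(x_2,\ldots,x_{n+1})$ and apply (\ref{eq:B-asso}) in the opposite direction, with $\bfx=(x_1)$, $\bfy=(x_2,\ldots,x_{n+1})$, $\bfz=\varepsilon$, so as to rewrite $F_{n+1}(x_1,\ldots,x_{n+1})$ as $F_{n+1}(x_1,n\Cdot w)$; applying (\ref{eq:anc.2.5nF}) to this new tuple produces precisely $P(F_n(x_1,(n-1)\Cdot w),w)$.

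For (\ref{eq:anc.11n}), I would use (\ref{eq:anc.2.5nF}) backwards to rewrite the left-hand side as $F_{n+1}(F_n(x_2,\ldots,x_{n+1}),x_2,\ldots,x_n,x_{n+1})$, and then apply (\ref{eq:B-asso}) one more time to collapse the final $n$ arguments $(x_2,\ldots,x_{n+1})$ into $n\Cdot F_n(x_2,\ldots,x_{n+1})$; this yields $F_{n+1}((n+1)\Cdot F_n(x_2,\ldots,x_{n+1}))$, and idempotency of $F_{n+1}$ finishes the proof. I do not foresee any serious obstacle: the only nontrivial ingredient is the promotion from range-idempotency to idempotency via Lemma~\ref{lemma:123}, and once that is in hand the lemma reduces to three bookkeeping applications of the B-associativity identity.
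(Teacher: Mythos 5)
Your proof is correct and follows essentially the same route as the paper: the same definition $P(x,y)=F_{n+1}(n\Cdot x,y)$, the same promotion from range-idempotence to idempotence via Lemma~\ref{lemma:123}, and the same B-associativity collapses of prefix/suffix blocks. The only (harmless) difference is that the paper also introduces the auxiliary function $Q(x,y)=F_{n+1}(x,n\Cdot y)$ and eliminates it via the substitution $x_2=\cdots=x_{n+1}$, whereas you obtain (\ref{eq:anc.2.5n}) and (\ref{eq:anc.11n}) by re-applying the already-established identity (\ref{eq:anc.2.5nF}) to the collapsed tuples, which is slightly more economical bookkeeping.
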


\begin{proof}
Consider the binary polynomial functions $P\colon\RR^2\to\RR$ and $Q\colon\RR^2\to\RR$ defined by $P(x,y)=F_{n+1}(n\Cdot x,y)$ and $Q(x,y)=F_{n+1}(x,n\Cdot y)$, respectively. Since $F_{n+1}$ is nonconstant, by Lemma~\ref{lemma:123} it must be idempotent and therefore so are $P$ and $Q$. By B-associativity of $F$, we then obtain Eq.~(\ref{eq:anc.2.5nF}) and
\begin{equation}\label{eq:PQ-BAn}
P(F_n(x_1,\ldots,x_n),x_{n+1}) ~=~ Q(x_1,F_n(x_2,\ldots,x_{n+1})).
\end{equation}
Clearly, $F_n$ is nonconstant by Fact~\ref{fact:1}. Setting $x_{n+1}=x_n=\cdots =x_2$ in Eq.~(\ref{eq:PQ-BAn}) and then using idempotence, we obtain
$$
P(F_n(x_1,(n-1)\Cdot x_2),x_2) ~=~ Q(x_1,x_2).
$$
Then, substituting for $Q$ in Eq.~(\ref{eq:PQ-BAn}) from the latter equation, we obtain Eq.~(\ref{eq:anc.2.5n}). Finally, setting $x_1=F_n(x_2,\ldots,x_{n+1})$ in either Eq.~(\ref{eq:anc.2.5n}) or Eq.~(\ref{eq:PQ-BAn}) and then using idempotence, we obtain Eq.~(\ref{eq:anc.11n}).
\end{proof}

\begin{proposition}\label{prop:3}
Let $F\colon\RR^*\to\RR$ be a B-associative polynomial function. If $F_3$ is nonconstant, then $F_2$ must be of degree $1$.
\end{proposition}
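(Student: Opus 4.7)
The plan is to assume $\deg F_2\geqslant 2$ and derive a contradiction. Since $F_3$ is nonconstant, Fact~\ref{fact:1}(i) shows that $F_2$ is also nonconstant, and Lemma~\ref{lemma:123} then forces $F_2$ to be idempotent. Applying Lemma~\ref{lemma:123xxe} with $n=2$ we obtain idempotent binary polynomial functions $P,Q\colon\RR^2\to\RR$ with
\[
P(F_2(x_1,x_2),x_3)\;=\;Q(x_1,F_2(x_2,x_3)). \qquad(\ast)
\]
All the information I will exploit is contained in~$(\ast)$.

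Write $F_2(u,v)=\sum a_{ij}u^iv^j$, let $d_u,d_v$ be the partial degrees of $F_2$, and introduce the top-variable coefficient polynomials $\bar F_{2,u}(v)=\sum_j a_{d_u,j}v^j$ and $\bar F_{2,v}(u)=\sum_i a_{i,d_v}u^i$. Use analogous notation $e_y,e_z,e'_y,e'_z$ and $\bar P_y,\bar P_z,\bar Q_y,\bar Q_z$ for $P$ and $Q$. My first step is to match leading coefficients in each of $x_1,x_2,x_3$ on both sides of~$(\ast)$. The identity coming from the $x_2$-leading terms,
\[
\bar P_y(x_3)\,\bar F_{2,v}(x_1)^{e_y}=\bar Q_z(x_1)\,\bar F_{2,u}(x_3)^{e'_z},
\]
separates the independent variables $x_1,x_3$, yielding a constant $\lambda\in\RR$ with $\bar P_y(z)=\lambda\bar F_{2,u}(z)^{e'_z}$ and $\bar Q_z(y)=\lambda\bar F_{2,v}(y)^{e_y}$. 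Substituting these into the $x_1$- and $x_3$-leading identities and specializing $x_2=x_3=t$ and $x_1=x_2=t$ (respectively) via idempotence $F_2(t,t)=t$ produces the two functional equations
\[
\bar F_{2,u}(F_2(x_2,x_3))^{e_y+e'_z}=\bar F_{2,u}(x_2)^{e_y}\,\bar F_{2,u}(x_3)^{e'_z}
\]
and the symmetric equation with $\bar F_{2,u}$ replaced by $\bar F_{2,v}$.

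If at least one of $\bar F_{2,u},\bar F_{2,v}$ is nonconstant, comparing degrees in the two relevant variables of the corresponding equation forces $(e_y+e'_z)(d_u+d_v)=e_y+e'_z$. Since $\deg F_2\leqslant d_u+d_v$ and $\deg F_2\geqslant 2$, we have $d_u+d_v\geqslant 2$, so necessarily $e_y+e'_z=0$. But this makes $P$ independent of its first argument and $Q$ independent of its second, whence $F_3(x_1,x_2,x_3)=P(x_3)=Q(x_1)$ is constant—contradicting the hypothesis that $F_3$ is nonconstant.

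The hard part will be the remaining subcase in which both $\bar F_{2,u}$ and $\bar F_{2,v}$ are constants; the two functional equations above are then trivial and give no information. In this situation I would iterate the leading-coefficient comparison to the successive sub-leading coefficients of $F_2$ in $x_1$ and $x_2$: the same separation phenomenon rules out every mixed monomial $u^iv^j$ with $i,j\geqslant 1$, forcing $F_2$ into the separable form $F_2(u,v)=h(u)+v-h(v)$ for some univariate polynomial $h$. Substituting this form back into~$(\ast)$ and analyzing the resulting identity (e.g.\ by differentiating in $x_2$ and comparing top $x_2$-coefficients) would then force $h'$ to be constant, so $h$ is affine and $F_2$ is linear, again contradicting $\deg F_2\geqslant 2$.
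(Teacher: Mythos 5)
Your setup and the first half of the argument are sound: invoking Lemma~\ref{lemma:123xxe} with $n=2$, comparing leading coefficients in $x_2$ on both sides of $(\ast)$, separating variables to get $\bar P_y=\lambda\bar F_{2,u}^{e'_z}$ and $\bar Q_z=\lambda\bar F_{2,v}^{e_y}$, and concluding $e_y+e'_z=0$ (hence $F_3$ constant, a contradiction) when one of $\bar F_{2,u},\bar F_{2,v}$ is nonconstant all check out. This part plays the role of the paper's Claim, which reaches the same conclusion (constancy of the top coefficient polynomials) by a degree comparison in Eq.~(\ref{eq:anc.2.5}); you should still dispose explicitly of the degenerate subcases $d_u=0$, $d_v=0$, $e_y=0$, $e'_z=0$, but those are easy via idempotence.

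The genuine gap is exactly where you announce it: the subcase where both $\bar F_{2,u}$ and $\bar F_{2,v}$ are constant, which is where the real content of the proposition lies. Your plan to iterate the leading-coefficient comparison down through the sub-leading coefficients of $(\ast)$ so as to kill every mixed monomial of $F_2$ is not a proof, and I do not believe it goes through as described: sub-leading coefficients of the composition $P(F_2(x_1,x_2),x_3)$ do not separate the way the top one does (once you descend $d_v$ levels, the coefficients $P_{e_y-1},P_{e_y-2},\dots$ all mix into the same $x_2$-degree), and the multinomial factors such as $e_yA^{e_y-1}B(x_1)$ that arise can vanish when $\RR$ has positive characteristic, which is permitted here (e.g.\ $\mathrm{Frac}(\RR)=\mathbb{F}_p(t)$); the concluding step via $h'$ fails there for the same reason. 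The paper resolves this case by a different extraction from the same identity: specializing $x_1=F_2(x_2,x_3)$ in $(\ast)$ and using idempotence of $Q$ gives $P(F_2(F_2(x_2,x_3),x_2),x_3)=F_2(x_2,x_3)$ (Eq.~(\ref{eq:anc.11})). When $F_2$ has degree $q\geqslant 2$ in its first variable, the inner composition $F_2(F_2(x_2,x_3),x_2)$ has total degree $qd>d$, so the left-hand side has total degree strictly greater than $d=\deg F_2$, and forcing its top homogeneous part to vanish yields an immediate contradiction without ever determining the shape of $F_2$. To complete your proof you would need either to carry out such a degree blow-up argument or to genuinely prove the ``no mixed monomials'' claim; as written, the hard half of the proposition remains unproved.
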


\begin{proof}
%
%
Let us particularize Lemma~\ref{lemma:123xxe} to the case $n=2$. There exists an idempotent binary polynomial function $P\colon\RR^2\to\RR$ such that
\begin{equation}\label{eq:anc.2.5}
P(F_2(x_1,x_2),x_3) ~=~ P(F_2(x_1,F_2(x_2,x_3)),F_2(x_2,x_3))
\end{equation}
and
\begin{equation}\label{eq:anc.11}
P(F_2(F_2(x_2,x_3),x_2),x_3) - F_2(x_2,x_3) ~=~ 0.
\end{equation}
Clearly, $F_2$ is nonconstant by Fact~\ref{fact:1}. Let us express $F_2$ and $P$ in the following convenient ways. Let $p$ (resp.\ $q$) be the degree of $P$ (resp.\ $F_2$) in the first variable. Then there are polynomial functions $P_i\colon\RR\to\RR$ $(i=0,\ldots,p)$ and $Q_j\colon\RR\to\RR$ $(j=0,\ldots,q)$, with $P_p\neq 0$ and $Q_q\neq 0$, such that
\begin{equation}\label{eq:PpQq}
P(x,y) ~=~ \sum_{i=0}^p x^i{\,} P_i(y)\quad\mbox{and}\quad F_2(x,y) ~=~ \sum_{j=0}^q x^j{\,} Q_j(y).
\end{equation}
Considering the standard form of $F_2$, we can also write
$$
F_2(x,y) ~=~ \sum_{k+\ell\leqslant d}a_{k,\ell}{\,}x^ky^{\ell} ~=~ \sum_{m=0}^d R_m(x,y),
$$
where $d$ is the degree of $F_2$ and
$$
R_m(x,y) ~=~ \sum_{k+\ell = m}a_{k,\ell}{\,}x^ky^{\ell},\quad\mbox{with}~ R_d\neq 0.
$$

\begin{claim}\label{claim:2}
If $p>0$ and $q>0$, then the polynomial functions $P_p$ and $Q_q$ are constant.
\end{claim}

\begin{proof}
Substituting for $P$ and $F_2$ from Eq.~(\ref{eq:PpQq}) in Eq.~(\ref{eq:anc.2.5}) and then equating the leading terms in $x_1$ in the resulting equation, we obtain
$$
(x_1^q{\,}Q_q(x_2))^p{\,}P_p(x_3) ~=~ (x_1^q{\,}Q_q(F_2(x_2,x_3)))^p{\,}P_p(F_2(x_2,x_3)),
$$
or, equivalently, $G(x_2,x_3)-H(x_2,x_3)=0$, where
$$
G(x_2,x_3) ~=~ Q_q^p(F_2(x_2,x_3)){\,}P_p(F_2(x_2,x_3))\quad\mbox{and}\quad H(x_2,x_3)~=~ Q_q^p(x_2){\,}P_p(x_3).
$$
Denote by $ax^{\alpha}$ (resp.\ $bx^{\beta}$) the leading term of $P_p$ (resp.\ $Q_q$); hence $ab\neq 0$. Clearly, the leading term in $x_2$ of $G$ is
\begin{equation}\label{eq:leadGx2}
(b(x_2^q{\,}Q_q(x_3))^{\beta})^p{\,}a(x_2^q{\,}Q_q(x_3))^{\alpha}
\end{equation}
and is therefore of degree $pq\beta +q\alpha$. Similarly, the leading term in $x_2$ of $H$ is
$$
(bx_2^{\beta})^p{\,}P_p(x_3)
$$
and is of degree $p\beta$.

If $pq\beta +q\alpha > p\beta$, then the expression in Eq.~(\ref{eq:leadGx2}) must be the zero polynomial function, which is impossible since $Q_q\neq 0$. Therefore we must have $pq\beta +q\alpha = p\beta$, that is $\alpha=0$ (i.e., $P_p$ is the constant $a$) and $(q-1)\beta =0$. If $q=1$, then the leading term in $x_2$ of $G(x_2,x_3)-H(x_2,x_3)$ is
$$
(b(x_2{\,}Q_q(x_3))^{\beta})^p{\,}a-(bx_2^{\beta})^p{\,}a ~=~ (bx_2^{\beta})^p{\,}a{\,}(Q_q(x_3)^{p\beta}-1){\,},
$$
and hence $Q_q$ must be constant.
\end{proof}

Let us now prove that $F_2$ is of degree $1$. We consider the following cases, which cover all the possibilities.
\begin{description}
\item[Case $q=0$] We have $F_2(x,y)=Q_0(y)$ and therefore $y=F_2(y,y)=Q_0(y)=F_2(x,y)$, which shows that $F_2$ is of degree $1$.

\item[Case $p=0$] We have $P(x,y)=P_0(y)$. Using idempotence, we obtain $y=P(y,y)=P_0(y)$ and therefore $P(x,y)=y$. Substituting for $P$ in Eq.~(\ref{eq:anc.2.5}), we obtain $x_3=F_2(x_2,x_3)$ and therefore $F_2$ is of degree $1$.

\item[Case $p>0$ and $q=1$] We have $F_2(x_1,x_2)=x_1Q_1(x_2)+Q_0(x_2)$ with $Q_1\neq 0$. Since $F_2$ is idempotent, we also have $x=F_2(x,x)=xQ_1(x)+Q_0(x)$. But $Q_1$ is constant by the claim. It follows that $Q_0$ is of degree $1$ and therefore so is $F_2$.

\item[Case $p>0$ and $q>1$] By definition of $q$ we must have $d\geqslant 2$. Let us compute the leading terms (i.e., homogeneous terms of highest degree) of the left-hand side of Eq.~(\ref{eq:anc.11}). On the one hand, we have
$$
F_2(F_2(x_2,x_3),x_2) ~=~ \sum_{k+\ell\leqslant d} a_{k,\ell}\underbrace{\left(\sum_{m=0}^d R_m(x_2,x_3)\right)^kx_2^{\ell}}_{(*)}{\,},
$$
where the expression $(*)$ is of degree $kd+\ell$, with $R_d^k(x_2,x_3){\,}x_2^{\ell}$ as leading terms. We also have
$$
\max\{kd+\ell : k+\ell\leqslant d,~~ a_{k,\ell}\neq 0\} ~=~ qd.
$$
Indeed, if $k>q$, then $a_{k,\ell}=0$ by definition of $q$. If $k=q$ and $\ell\neq 0$, then $a_{k,\ell}=0$ by the claim. If $k=q$ and $\ell=0$, then $a_{k,\ell}\neq 0$ and $kd+\ell = qd$. Finally, if $k\leqslant q-1$, then
\begin{eqnarray*}
kd+\ell &\leqslant & kd+d-k ~=~ k(d-1)+d ~\leqslant ~ (q-1)(d-1)+d\\
&=& qd-q+1 ~< ~ qd\quad (\mbox{since $q>1$}).
\end{eqnarray*}
This shows that the leading terms of $F_2(F_2(x_2,x_3),x_2)$ are of degree $qd$ and consist of $a_{q,0}{\,}R_d^q(x_2,x_3)$, where $a_{q,0}\neq 0$.

Now, to compute the leading terms of $P(F_2(F_2(x_2,x_3),x_2),x_3)$, it is convenient to express $P$ as
$$
P(x,y) ~=~ \sum_{rqd+s\leqslant e}b_{r,s}{\,}x^ry^{s} ~=~ \sum_{m=0}^e S_m(x,y),
$$
where $e=\max\{rqd+s : b_{r,s}\neq 0\}$ and
$$
S_m(x,y) ~=~ \sum_{rqd+s=m}b_{r,s}{\,}x^ry^{s},\quad\mbox{with}~ S_e\neq 0.
$$
It follows that the leading terms of $P(F_2(F_2(x_2,x_3),x_2),x_3)$ are of degree $e$ and consist of $S_e(a_{q,0}{\,}R_d^q(x_2,x_3),x_3)$. On the other hand, the leading terms of $F_2(x_2,x_3)$ are of degree $d$ and consist of $R_d(x_2,x_3)$.

We observe that there exists $r>0$ such that $b_{r,s}\neq 0$ (otherwise, if $b_{r,s}=0$ for every $r>0$, then $p=0$, a contradiction). By definition of $e$, we then have $e\geqslant rdq>d$. By Eq.~(\ref{eq:anc.11}), we then have $S_e(a_{q,0}{\,}R_d^q(x_2,x_3),x_3) = 0$, or equivalently,
\begin{equation}\label{eq:anc.11a}
\sum_{rqd+s=e}b_{r,s}{\,}\left(a_{q,0}{\,}R_d^q(x_2,x_3)\right)^rx_3^s ~=~ 0.
\end{equation}
Since $R_d(x_2,x_3)$ is of degree $\geqslant 1$ in $x_2$ (otherwise, we would have $R_d(x,y)=T(y)$ and therefore $0=R_d(y,y)=T(y)=R_d(x,y)$, a contradiction), we can write
$$
R_d(x_2,x_3) ~=~ \sum_{k=0}^{f}x_2^k{\,}T_k(x_3),\quad\mbox{with $f>0$ and $T_f\neq 0$}.
$$
Equating the leading terms in $x_2$ in Eq.~(\ref{eq:anc.11a}), we obtain
$$
b_{r_0,e-r_0qd}{\,}\left(a_{q,0}{\,}x_2^{fq}{\,}T_f^q(x_3)\right)^{r_0}x_3^{e-r_0qd} ~=~ 0,
$$
where $r_0 = \max\{r : rqd+s=e,~b_{r,s}\neq 0\}$. This is a contradiction.
\end{description}
This completes the proof of the proposition.
\end{proof}

\begin{proposition}\label{prop:new}
Let $F\colon\RR^*\to\RR$ be a B-associative polynomial function. If $F_n=M_n^z$ for some $n\geqslant 2$ and some $z\in\RR$ such that $\Delta_n^z\neq 0$, then either $F_{n+1}=M_{n+1}^z$ or $F_{n+1}$ is constant. Moreover, if $\Delta_{n+1}^z=0$, then $F_{n+1}$ is constant.
\end{proposition}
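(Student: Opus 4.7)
My plan is to assume $F_{n+1}$ is nonconstant and show that $F_{n+1}=M_{n+1}^z$; the ``moreover'' statement will then follow as the contrapositive, since this conclusion requires $\Delta_{n+1}^z$ to be invertible. By Lemma~\ref{lemma:123xxe} there exists an idempotent binary polynomial $P\colon\RR^2\to\RR$ satisfying both Eq.~(\ref{eq:anc.2.5nF}) and Eq.~(\ref{eq:anc.2.5n}), so equating the two right-hand sides yields a single polynomial identity in $x_1,\ldots,x_{n+1}$ involving only $P$ and the linear form $F_n=\sum_i y_i x_i$, where $y_i:=(\Delta_n^z)^{-1}z^{n-i}(1-z)^{i-1}$.

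The cases $z=0$ and $z=1$ are dispatched immediately: there $F_n$ is the last, resp.\ first, projection, and Eqs.~(\ref{eq:anc.2.5nF}) and (\ref{eq:anc.2.5n}) quickly force $F_{n+1}$ to be the same projection, which coincides with $M_{n+1}^z$. So I focus on $z\neq 0,1$, where every $y_i$ is nonzero. Specializing the combined identity by setting $x_2=\cdots=x_n=t$, differentiating in $t$, and evaluating at $t=x_{n+1}$, I obtain the polynomial PDE
\[
(1-y_1)\,y_n\,\partial_1 P(u,s)\;=\;(1-y_n)\,\partial_2 P(u,s).
\]
Its polynomial solutions have the form $P(u,s)=f\bigl((1-y_n)u+(1-y_1)y_n\,s\bigr)$ for some univariate polynomial $f$, and the idempotence $P(x,x)=x$ translates into $f\bigl((1-y_1 y_n)x\bigr)=x$.

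A telescoping computation (with $a=z$, $b=1-z$, using $\Delta_k^z=a^{k-1}+a^{k-2}b+\cdots+b^{k-1}$) yields $1-y_n=z\,\Delta_{n-1}^z/\Delta_n^z$, and the algebraic identity $(a^n-b^n)^2-(ab)^{n-1}(a-b)^2=(a^{n+1}-b^{n+1})(a^{n-1}-b^{n-1})$ gives the key formula $1-y_1y_n=\Delta_{n+1}^z\,\Delta_{n-1}^z/(\Delta_n^z)^2$. If $\Delta_{n+1}^z=0$ then $1-y_1y_n=0$, so $f(0)=x$ is inconsistent and no such $P$ exists; hence $F_{n+1}$ must be constant, proving the ``moreover'' part. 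Otherwise $1-y_1y_n$ is invertible, $f$ is forced to be linear, and $P(u,s)=\mu u+(1-\mu)s$ with $\mu=(1-y_n)/(1-y_1y_n)=z\,\Delta_n^z/\Delta_{n+1}^z$. Substituting into $F_{n+1}=P(F_n,x_{n+1})$ and using $\Delta_{n+1}^z=z\Delta_n^z+(1-z)^n$ to rewrite $1-\mu$ then gives $F_{n+1}=M_{n+1}^z$.

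The main obstacle will be the PDE step: extracting a clean first-order PDE from the multivariate polynomial identity requires a careful specialization (the trick is that setting $x_2=\cdots=x_n=t$ and later $t=x_{n+1}$ collapses both sides of the specialized identity to be evaluated at the \emph{same} point $(u,s)$, with $u,s$ still free). Once the PDE is in hand, the characterization of its polynomial solutions subject to idempotence is routine, and the rest is bookkeeping with the closed forms for $\Delta_n^z$.
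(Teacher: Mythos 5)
Your route is genuinely different from the paper's (which substitutes $F_n=M_n^z$ into Eq.~(\ref{eq:anc.11n}) and compares coefficients of the top power of $x_2$, never differentiating), and your algebra is correct as far as it goes: the identities $1-y_n=z\,\Delta_{n-1}^z/\Delta_n^z$ and $1-y_1y_n=\Delta_{n+1}^z\Delta_{n-1}^z/(\Delta_n^z)^2$ check out, and the endgame does recover $P(x,y)=cx+(1-c)y$ with $c=z\Delta_n^z/\Delta_{n+1}^z$. But there is a genuine gap at the PDE step. From $(1-y_1)y_n\,\partial_1P=(1-y_n)\,\partial_2P$ you conclude that $P(u,s)=f\bigl((1-y_n)u+(1-y_1)y_n s\bigr)$; that classification of polynomial solutions is valid only in characteristic zero. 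The paper's $\RR$ is an arbitrary infinite commutative integral domain, which may have characteristic $p>0$ (e.g.\ $\mathbb{F}_p[t]$), and there the kernel of the derivation $\partial_w$ in your rotated coordinates is $\RR[v,w^p]$ rather than $\RR[v]$: solutions such as $f(v)+w^p$ survive, and your idempotence condition $f\bigl((1-y_1y_n)x\bigr)=x$ does not see them. So both branches of your dichotomy (the $\Delta_{n+1}^z=0$ contradiction and the linearity of $f$) rest on a form for $P$ that is not established in general. The paper's degree-and-leading-coefficient comparison in Eq.~(\ref{eq:aia17}) is characteristic-free, which is precisely why it is phrased that way.

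A second, smaller gap: your PDE becomes vacuous when $\Delta_{n-1}^z=0$, since then $1-y_1=1-y_n=0$ and the equation reads $0=0$. The hypotheses of the proposition ($n\geqslant 2$, $\Delta_n^z\neq 0$) do not exclude this: for $n\geqslant 4$ one can have $z/(1-z)$ a primitive $(n-1)$-th root of unity in $\C$, giving $\Delta_{n-1}^z=0$ while $\Delta_n^z\neq 0$ and possibly $\Delta_{n+1}^z\neq 0$. In that situation you cannot conclude anything about $P$, and note that $1-y_1y_n=0$ there even though $\Delta_{n+1}^z\neq 0$, so your inconsistency criterion would point the wrong way. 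The paper's argument needs only $z\neq 0$ and $\Delta_n^z\neq 0$ (it compares the $x_2$-coefficient $\Delta_{n+1}^z z^{n-2}/(\Delta_n^z)^2$ against $z^{n-1}/\Delta_n^z$), so it covers this case automatically. To salvage your approach you would either need to add $\Delta_{n-1}^z\neq 0$ as a hypothesis (harmless for the inductive use in the Main Theorem, but it changes the stated proposition) or handle that case separately, and in any event replace the PDE solution step by an argument valid in positive characteristic.
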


\begin{proof}
Assume that $F_n=M_n^z$ for some $n\geqslant 2$ and some $z\in\RR$ such that $\Delta_n^z\neq 0$ and assume that $F_{n+1}$ is nonconstant. Substituting in Eq.~(\ref{eq:anc.11n}) and observing that $(1-z){\,}\Delta_n^z+z^n=\Delta_{n+1}^z$, we obtain
\begin{equation}\label{eq:aia17}
P\left(\Delta_{n+1}^z{\,}\sum_{i=2}^n\frac{z^{n-i}{\,}(1-z)^{i-2}}{(\Delta_n^z)^2}{\,}x_i
+\frac{z^{n-1}(1-z)^{n-1}}{(\Delta_n^z)^2}{\,}x_{n+1},x_{n+1}\right) ~=~ \sum_{i=1}^n\frac{z^{n-i}{\,}(1-z)^{i-1}}{\Delta_n^z}{\,}x_{i+1}.
\end{equation}
If $z=0$, then Eq.~(\ref{eq:aia17}) reduces to $P(x_n,x_{n+1})=x_{n+1}$. By Eq.~(\ref{eq:anc.2.5nF}), we obtain $F_{n+1}(x_1,\ldots,x_{n+1})=x_{n+1}$, that is, $F_{n+1}=M_{n+1}^z$. We can henceforth assume that $z\neq 0$.

If $\Delta_{n+1}^z=0$, then we obtain a contradiction; indeed, the left-hand side of Eq.~(\ref{eq:aia17}) is independent of $x_2$ whereas the coefficient of $x_2$ in the right-hand side is $z^{n-1}/\Delta_n^z$. In this case $F_{n+1}$ must be constant.

We can now assume that $\Delta_{n+1}^z\neq 0$. Using the expression of $P$ given in Eq.~(\ref{eq:PpQq}) and equating the leading terms in $x_2$ in Eq.~(\ref{eq:aia17}), we obtain
$$
\left(\frac{\Delta_{n+1}^z}{(\Delta_n^z)^2}{\,} z^{n-2}{\,}x_2\right)^p P_p(x_{n+1}) ~=~ \frac{z^{n-1}}{\Delta_n^z}{\,}x_2.
$$
It follows that $p=1$ and that $P_1$ is constant, say $P_1=c$, where $c=z{\,}\Delta_n^z/\Delta_{n+1}^z$. We then have $P(x,y)=cx+P_0(y)$ and, by idempotence of $P$, we also have $cx+P_0(x)=x$. Therefore, $P(x,y)=cx+(1-c)y$. Finally, by Eq.~(\ref{eq:anc.2.5nF}) we obtain
$$
F_{n+1}(x_1,\ldots,x_{n+1}) ~=~ c{\,}F_n(x_1,\ldots,x_n) + (1-c){\,} x_{n+1} ~=~ M_{n+1}^z{\,}.
$$
This completes the proof of the proposition.
\end{proof}

Let us now show that any B-associative polynomial function $F\colon\RR^*\to\RR$, where $\RR$ is a field, falls into one of the two cases given in the Main Theorem.

Suppose first that $F_1$ or $F_2$ is constant. In the latter case, $F_1$ is either constant or the identity function by Lemma~\ref{lemma:123}. By Fact~\ref{fact:1}, $F_n$ is constant for every $n\geqslant 2$ and therefore $F$ falls into case $(i)$ with $k=1$ or $k=2$.

Suppose now that $F_1$ and $F_2$ are nonconstant. These functions are idempotent by Lemma~\ref{lemma:123} and therefore $F_1$ is the identity function. If $F_2$ is of degree $1$, then by Lemma~\ref{lemma:123} we have $F_2(x,y)=zx+(1-z)y$ for some $z\in\RR$ and therefore $F$ falls into case (i) by Propositions~\ref{prop:new} and Fact~\ref{fact:1}. Otherwise if $F_2$ is of degree $\geqslant 2$, then by Proposition~\ref{prop:3} and Fact~\ref{fact:1} we have $F_1(x)=x$, $F_2(x,y)=zx+(1-z)y+R(x,y)$ for some $z\in\RR$ and some polynomial function $R\colon\RR^2\to\RR$ of degree $\geqslant 2$ such that $R(x,x)=0$ for all $x\in\RR$, and $F_n$ is constant for every $n\geqslant 3$. It is easy to see that a polynomial function $R\colon\RR^2\to\RR$ satisfies $R(x,x)=0$ for all $x\in\RR$ if and only if we have $R(x,y)=(x-y){\,}Q'(x,y)$ for some polynomial function $Q'\colon\RR^2\to\RR$. Indeed, if we write the homogeneous terms of degree $k$ of $R(x,y)$ in the form
$$
\sum_{j=0}^kc_j{\,}x^jy^{k-j} ~=~ (x-y){\,}\sum_{j=1}^k\bigg(\sum_{i=0}^{k-j}c_{k-i}\bigg){\,}x^{j-1}y^{k-j}+\bigg(\sum_{j=0}^kc_j\bigg){\,}y^k{\,},
$$
then we see that $R(x,x)=0$ if and only if $\sum_{j=0}^kc_j=0$. Thus, we have $F_2(x,y)=y+(x-y){\,}Q(x,y)$ for some polynomial function $Q\colon\RR^2\to\RR$ of degree $\geqslant 1$. Therefore, $F$ falls into case (ii).
This completes the proof of the Main Theorem when $\RR$ is a field.

Let us now prove the Main Theorem when $\mathcal{R}$ is an infinite integral domain. Using the identification of polynomials and polynomial functions, we can extend every B-associative $\ast$-ary polynomial function over an infinite integral domain $\mathcal{R}$ to a $\ast$-ary polynomial function on the fraction field $\mathrm{Frac}({\RR})$ of $\RR$. The latter function is still B-associative since the B-associativity property for $\ast$-ary polynomial functions is defined by a set of polynomial equations on the coefficients of the polynomial functions. Therefore, every B-associative $\ast$-ary polynomial function $F$ over $\mathcal{R}$ is the restriction to $\mathcal{R}$ of a B-associative $\ast$-ary polynomial function $\overline{F}$ over $\mathrm{Frac}({\RR})$. The possible expressions for such a polynomial function $\overline{F}$ are given by the Main Theorem over $\mathrm{Frac}(\RR)$. Clearly, if $\overline{F}$ falls into case (ii), then so does $F$. If $\overline{F}$ falls into case (i), then there exist $z\in\mathrm{Frac}(\RR)$ and an integer $k\geqslant 1$ or $k=\infty$, with $k\leqslant\inf\{n\geqslant 1 : \Delta_n^z= 0\}$, such that $\overline{F}\in [M^z]_k$. If $k=1$, then $\overline{F}_n$ is constant for every $n\geqslant 1$. Therefore $F_n$ is also a constant (in $\RR$) for every $n\geqslant 1$ and hence $F$ falls into case (i). If $k\geqslant 2$, then $\overline{F}\in [M^z]_k$, where
$z=\overline{F}_2(1,0)=F_2(1,0)\in\RR$. For every integer $n<k$, we have
$$
\overline{F}_n(\bfx) ~=~ M_n^z(\bfx) ~=~ \sum_{i=1}^n(\Delta_n^z)^{-1}\, z^{n-i}(1-z)^{i-1}{\,}x_i{\,}.
$$
Since $\overline{F}_n$ is the extension of $F_n$, the coefficient $(\Delta_n^z)^{-1}\, z^{n-i}(1-z)^{i-1}$ of $x_i$ in $\overline{F}_n(\bfx)$ is in $\RR$ for $i=1,\ldots,n$. A straightforward induction shows that $(\Delta_n^z)^{-1}\, z^{n-j}\in\RR$ for $j=1,\ldots,n$. Therefore $\Delta_n^z$ is invertible in $\RR$ for every $n<k$ and hence $k\leqslant n(z)$. This shows that $F$ falls into case (i). The proof is now complete.

\section*{Acknowledgments}

This research is partly supported by the internal research project F1R-MTH-PUL-12RDO2 of the University of Luxembourg. J. Tomaschek is supported by the National Research Fund, Luxembourg (AFR 3979497), and cofunded under the Marie Curie Actions of the European Commission (FP7-COFUND).


\begin{thebibliography}{99}

\bibitem{Ant98}
C.~Antoine.
\newblock Les Moyennes.
\newblock Volume 3383 of {\em Que sais-je?}
\newblock Presse Universitaires de France, Paris, 1998.


\bibitem{deF31}
B.~de Finetti.
\newblock Sul concetto di media.
\newblock {\em Giornale dell' Instituto Italiano degli Attari} 2(3):369--396, 1931.

\bibitem{FodRou94}
J.~Fodor and M.~Roubens.
\newblock {\em Fuzzy preference modelling and multicriteria decision support}.
\newblock Kluwer, Dordrecht, 1994.

\bibitem{GraMarMesPap09}
M.~Grabisch, J.-L.~Marichal, R.~Mesiar, and E.~Pap.
\newblock {\em Aggregation functions}.
\newblock Encyclopedia of Mathematics and its Applications, vol. 127.
\newblock Cambridge University Press, Cambridge, 2009.

\bibitem{Kol30}
A.~N. Kolmogoroff.
\newblock Sur la notion de la moyenne. ({F}rench).
\newblock {\em Atti Accad. Naz. Lincei}, 12(6):388--391, 1930.

\bibitem{Nag30}
M.~Nagumo.
\newblock {\"U}ber eine klasse der mittelwerte. ({G}erman).
\newblock {\em Japanese Journ. of Math.}, 7:71--79, 1930.

\bibitem{Sch09}
R.~Schimmack.
\newblock Der Satz vom arithmetischen Mittel in axiomatischer Begr\"undung.
\newblock {\em Math. Ann.} 68:125--132, 1909.

\end{thebibliography}

\end{document}